\crefname{hypothesis}{Hypothesis}{Hypotheses}
\title{A Novel Greedy Kaczmarz Method For Solving Consistent Linear Systems\thanks{Submitted to arXiv.
\funding{This work was funded by the National Natural Science Foundation of China (No. 11671060) and the Natural Science Foundation Project of CQ CSTC (No. cstc2019jcyj-msxmX0267).}}}
\author{Hanyu Li\thanks{College of Mathematics and Statistics, Chongqing University, Chongqing 401331, P.R. China
  (\email{lihy.hy@gmail.com or hyli@cqu.edu.cn;\ yjzhang@cqu.edu.cn}).}
\and Yanjun Zhang%\thanks{Department of Applied Mathematics, Fictional University, Boise, ID
 % (\email{ptfrank@fictional.edu}, \email{jesmith@fictional.edu}).}
%\and Jane E. Smith
\footnotemark[2]}
\begin{document}

\maketitle

% REQUIRED
\begin{abstract}
 With a quite different way to determine the working rows, we propose a novel greedy Kaczmarz method for solving consistent linear systems. Convergence analysis of the new method is provided. Numerical experiments show that, for the same accuracy, our method outperforms the  greedy randomized Kaczmarz method and the relaxed greedy randomized Kaczmarz method introduced recently by Bai and Wu [Z.Z. BAI AND W.T. WU, On greedy randomized Kaczmarz method for solving large sparse linear systems, SIAM J. Sci. Comput., 40 (2018), pp. A592--A606; Z.Z. BAI AND W.T. WU, On relaxed greedy randomized Kaczmarz methods for solving large sparse linear systems, Appl. Math. Lett., 83 (2018), pp. 21--26]
in term of the computing time.
\end{abstract}

% REQUIRED
\begin{keywords}
 greedy Kaczmarz method, greedy randomized Kaczmarz method, greedy strategy, iterative method, consistent linear systems
\end{keywords}

% REQUIRED
\begin{AMS}
65F10, 65F20, 65K05, 90C25, 15A06
\end{AMS}

\section{Introduction}
We consider the following consistent linear systems
\begin{equation}
\label{1}
Ax=b,
\end{equation}
where $A\in C^{m\times n}$, $b\in C^{m}$, and $x$ is the $n$-dimensional unknown vector. As we know, the Kaczmarz method \cite{kaczmarz1} is a popular so-called row-action method for solving the systems (\ref{1}). In 2009, Strohmer and Vershynin \cite{Strohmer2009} proved the linear convergence of the randomized Kaczmarz (RK) method. Following that, Needell \cite{Needell2010} found that the RK method is not converge to the ordinary least squares solution when the system is inconsistent. To overcome it, Zouzias and Freris \cite{Completion2013} extended the RK method to the randomized extended Kaczmarz (REK) method. Later, Ma, Needell, and Ramdas \cite{Completion2015} provided a unified theory of these related iterative methods in all possible system settings. Recently, many works on Kaczmarz methods were reported; see for example \cite{Bai2018, Bai2018r, Gao2019, Dukui2019, Wu2020, Chen2020, Niu2020} and references therein.

In 2018, Bai and Wu \cite{Bai2018} first constructed a greedy randomized Kaczmarz (GRK) method by introducing an efficient probability criterion for selecting the working rows from the coefficient matrix $A$, which avoids a weakness of the one adopted in the RK method. As a result, the GRK method is faster than the RK method in terms of the number of iterations and computing time. Subsequently, based on the GRK method, a so-called relaxed greedy randomized Kaczmarz (RGRK) method was proposed in \cite{Bai2018r} by introducing a relaxation parameter $\theta$, which makes the convergence factor of the RGRK method be smaller than that of the GRK method when it is in $[\frac{1}{2}, 1]$, and the convergence factor reaches the minimum  when $\theta=1$. For the latter case, i.e., $\theta=1$, Du and Gao \cite{Gao2019} called it the maximal weighted residual Kaczmarz method and carried out extensive experiments to test this method. By the way, the idea of greed applied in \cite{Bai2018,Bai2018r} also has wide applications, see for example \cite{Griebel2012,Nguyen2017,Liu2019,Bai2019} and references therein.

In the present paper, we propose a novel greedy Kaczmarz (GK) method. Unlike the GRK and RGRK methods, the new method adopts a quite different way to determine the working rows of the matrix $A$ and hence needs less computing time in each iteration; see the detailed analysis before Algorithm \ref{alg3} below.  Consequently, the GK method can outperform the  GRK and RGRK methods in term of the computing time. This result is confirmed by extensive numerical experiments,  which show that, for the same accuracy, the GK method requires almost the same number of iterations as those of the GRK and RGRK methods, but spends less computing time. In addition, we also prove the convergence of the GK method in theory.

The rest of this paper is organized as follows. In Section \ref{sec2}, notation and some preliminaries are provided. We present our novel GK method and its convergence properties in Section \ref{sec3}. Experimental results are given in Section \ref{sec4}.

\section{Notation and preliminaries }\label{sec2}
For a vector $z\in C^{n}$, $z^{(j)}$ represents its $j$th entry. For a matrix $G=(g_{ij})\in C^{m\times n}$, $G^{(i)}$, $\|G\|_2$, and $\|G\|_F$ denote its $i$th row, spectral norm, and Frobenius norm, respectively. In addition, we denote % the identity matrix by $I$, its $j$th column by $e_j$,
the smallest positive eigenvalue of $G^{\ast}G$ by $\lambda_{\min}\left( G^{\ast} G\right)$, where $(\cdot)^{\ast}$ denotes the conjugate transpose of a vector or a matrix, and the number of elements of a set $\mathcal{W}$ by $|\mathcal{W}|$.

In what follows, we use $x_{\star}=A^{\dag}b$, with $A^{\dag}$ being the Moore-Penrose pseudoinverse, to denote the least-Euclidean-norm solution to the systems (\ref{1}). For finding this solution, % solving the systems (\ref{1}),
Bai and Wu \cite{Bai2018} proposed the GRK method listed as follows, where $r_k = b-Ax_k$ denotes the residual vector.
\begin{algorithm}
\caption{The GRK method}
\label{alg1}
\begin{algorithmic}
\STATE{\mbox{Input:} ~$A\in C^{m\times n}$, $b\in C^{m}$, $\ell$ , initial estimate $x_0$}
\STATE{\mbox{Output:} ~$x_\ell$}
\FOR{$k=0, 1, 2, \ldots, \ell-1$}
\STATE{Compute
\begin{align}
  \epsilon_{k}=\frac{1}{2}\left(\frac{1}{\left\|r_{k}\right\|_{2}^{2}} \max _{1 \leq i_{k} \leq m}\left\{\frac{\left|r^{\left(i_{k}\right)}_k\right|^{2}}{\left\|A^{\left(i_{k}\right)}\right\|_{2}^{2}}\right\}+\frac{1}{\|A\|_{F}^{2}}\right).\notag
\end{align}}
\STATE{Determine the index set of positive integers
\begin{align}
 \mathcal{U}_{k}=\left\{i_{k}\Bigg|| r^{\left(i_{k}\right)}_k|^{2} \geq \epsilon_{k}\left\|r_{k}\right\|_{2}^{2}\|A^{\left(i_{k}\right)}\|_{2}^{2}\right\}.\notag
\end{align}}
\STATE{Compute the $i$th entry $\tilde{r}_k^{(i)}$ of the vector $\tilde{r}_k$ according to
  $$
\tilde{r}_{k}^{(i)}=\left\{\begin{array}{ll}{r^{(i)}_{k},} & {\text { if } i \in \mathcal{U}_{k}}, \\ {0,} & {\text { otherwise. }}\end{array}\right.
$$}
\STATE{Select $i_{k} \in \mathcal{U}_{k}$ with probability $\operatorname{Pr}\left(\mathrm{row}=i_{k}\right)=\frac{|\tilde{r}_{k}^{\left(i_{k}\right)}|^{2}}{\left\|\tilde{r}_{k}\right\|_{2}^{2}}$.}
\STATE{Set
$$ x_{k+1}=x_{k}+\frac{   r^{ (i_{k}) }_{k} }{\| A^{(i_{k})}\|_{2}^{2}}( A^{(i_{k})})^{\ast}.$$}
\ENDFOR
%\RETURN $T$
\end{algorithmic}
\end{algorithm}

From the definitions of  $\epsilon_{k}$ and $\mathcal{U}_{k}$ in Algorithm \ref{alg1}, we have  that if $\ell\in \mathcal{U}_{k}$, then
\begin{eqnarray*}
\frac{| r^{\left(\ell\right)}_{k}|^{2}}{\|A^{\left(\ell\right)}\|_{2}^{2}}\geq\frac{1}{2}\left( \max \limits _{1 \leq i_{k}  \leq m}\left\{\frac{\left|r^{\left(i_{k}\right)}_{k}\right|^{2}}{\left\|A^{\left(i_{k}\right)}\right\|_{2}^{2}}\right\}+\frac{\left\|r_{k}\right\|_{2}^{2}}{\|A\|_{F}^{2}}\right).
\end{eqnarray*}
Note that
\begin{equation*}
\label{3}
\max \limits _{1 \leq i_{k}  \leq m}\left\{\frac{\left|r^{\left(i_{k}\right)}_{k}\right|^{2}}{\left\|A^{\left(i_{k}\right)}\right\|_{2}^{2}}\right\}\geq\sum_{i_k=1}^{m}\frac{\|A^{(i_k)}\|^2_2}{\| A\|^2_F}\frac{ \left|r^{\left(i_{k}\right)}_{k}\right|^{2}}{\|A^{(i_k)}\|^2_2} \notag= \frac{\left\|r_{k}\right\|^{2}_2}{\|A\|_{F}^{2}}.
\end{equation*}
Thus, we can't conclude that if $\ell\in \mathcal{U}_{k}$, then
\begin{eqnarray*}
\frac{| r^{\left(\ell\right)}_{k}|^{2}}{\|A^{\left(\ell\right)}\|_{2}^{2}}\geq \max \limits _{1 \leq i_k  \leq m}\left\{\frac{| r^{\left(i_{k}\right)}_{k}|^{2}}{\|A^{\left(i_{k}\right)}\|_{2}^{2}}\right\}, \textrm{ i.e., } \ \frac{| r^{\left(\ell\right)}_{k}|^{2}}{\|A^{\left(\ell\right)}\|_{2}^{2}}= \max \limits _{1 \leq i_k  \leq m}\left\{\frac{| r^{\left(i_{k}\right)}_{k}|^{2}}{\|A^{\left(i_{k}\right)}\|_{2}^{2}}\right\}.
\end{eqnarray*}
As a result, there may exist some $\ell\in \mathcal{U}_{k}$ such that
\begin{equation}
\label{3343434}
 \frac{| r^{\left(\ell\right)}_{k}|^{2}}{\|A^{\left(\ell\right)}\|_{2}^{2}}<\max \limits _{1 \leq i_k  \leq m}\left\{\frac{| r^{\left(i_{k}\right)}_{k}|^{2}}{\|A^{\left(i_{k}\right)}\|_{2}^{2}}\right\}.
\end{equation}
Meanwhile, from the update formula, for any $i_k\in \mathcal{U}_{k}$, we have
\begin{equation}
\label{2}
\|x_{k+1}-x_{k}\|^2_2=\frac{| r^{\left(i_k\right)}_{k}|^{2}}{\|A^{\left(i_k\right)}\|_{2}^{2}}.
\end{equation}
Thus, combining (\ref{3343434}) and (\ref{2}), we can find that we can't make sure any row with the index from the index set $\mathcal{U}_{k}$ make the distance between $x_{k+1}$ and $x_{k}$ be the largest when finding $x_{k+1}$. Moreover, to compute $\epsilon_{k}$, we have to compute the norm of each row of the matrix $A$.

Based on the GRK method, % For solving (\ref{1}),
Bai and Wu \cite{Bai2018r} further designed  the RGRK method by introducing a relaxation parameter, % in the involved probability criterion, %obtaining the RGRK method
which is listed in %as follows, i.e., 
Algorithm \ref{alg2}.
\begin{algorithm}
\caption{The RGRK method}
\label{alg2}
\begin{algorithmic}
\STATE{\mbox{Input:} ~$A\in C^{m\times n}$, $b\in C^{m}$, $\theta\in [0,1]$, $\ell$ , initial estimate $x_0$}
\STATE{\mbox{Output:} ~$x_\ell$}
\FOR{$k=0, 1, 2, \ldots, \ell-1$}
\STATE{Compute
\begin{align}
  \varepsilon_{k}=\frac{\theta}{\left\|r_{k}\right\|_{2}^{2}} \max _{1 \leq i_{k} \leq m}\left\{\frac{\left|r^{\left(i_{k}\right)}_k\right|^{2}}{\left\|A^{\left(i_{k}\right)}\right\|_{2}^{2}}\right\}+\frac{1-\theta}{\|A\|_{F}^{2}}.\notag
\end{align}}
\STATE{Determine the index set of positive integers
\begin{align}
 \mathcal{V}_{k}=\left\{i_{k}\Bigg|| r^{\left(i_{k}\right)}_k|^{2} \geq \varepsilon_{k}\left\|r_{k}\right\|_{2}^{2}\|A^{\left(i_{k}\right)}\|_{2}^{2}\right\}.\notag
\end{align}}
\STATE{Compute the $i$th entry $\tilde{r}_k^{(i)}$ of the vector $\tilde{r}_k$ according to
  $$
\tilde{r}_{k}^{(i)}=\left\{\begin{array}{ll}{r^{(i)}_{k},} & {\text { if } i \in \mathcal{V}_{k}}, \\ {0,} & {\text { otherwise. }}\end{array}\right.
$$}
\STATE{Select $i_{k} \in \mathcal{V}_{k}$ with probability $\operatorname{Pr}\left(\mathrm{row}=i_{k}\right)=\frac{|\tilde{r}_{k}^{\left(i_{k}\right)}|^{2}}{\left\|\tilde{r}_{k}\right\|_{2}^{2}}$.}
\STATE{Set
$$ x_{k+1}=x_{k}+\frac{   r^{ (i_{k}) }_{k} }{\| A^{(i_{k})}\|_{2}^{2}}( A^{(i_{k})})^{\ast}.$$}
\ENDFOR
%\RETURN $T$
\end{algorithmic}
\end{algorithm}

It is easy to see that when $\theta=\frac{1}{2} $, the RGRK method is just the GRK method. Bai and Wu \cite{Bai2018r} showed that the convergence factor of the RGRK method is smaller than that of the GRK method when %the relaxation parameter
$\theta\in[\frac{1}{2}, 1]$, and the convergence factor reaches the minimum when $\theta=1$. For the latter case, we have that if $\ell\in \mathcal{V}_{k}$, then
\begin{eqnarray*}
\frac{| r^{\left(\ell\right)}_{k}|^{2}}{\|A^{\left(\ell\right)}\|_{2}^{2}}= \max \limits _{1 \leq i_k  \leq m}\left\{\frac{| r^{\left(i_{k}\right)}_{k}|^{2}}{\|A^{\left(i_{k}\right)}\|_{2}^{2}}\right\}.
\end{eqnarray*}
From the analysis following the Algorithm \ref{alg2}, in this case, the row with the index from the index set $\mathcal{V}_{k}$ can make the distance between $x_{k+1}$ and $x_{k}$ be the largest for any possible $x_{k+1}$. However, we still needs to compute the norm of each row of the matrix $A$ when computing $\varepsilon_{k}$.

%Moreover, similar to the analysis before the Algorithm \ref{alg2} above, it is easy to obtain that the size of $|\mathcal{V}_{k}|$ is larger than that of $|\mathcal{U}_{k}|$ when $\theta\in [0, \frac{1}{2}]$, and is smaller than that of $|\mathcal{U}_{k}|$ when $\theta\in [\frac{1}{2},1]$. It turns out that $|\mathcal{V}_{k}|$ attains its minimum at $\theta=1$. In this case, the RGRK method can reduce the computation cost at each iteration and hence behaves better in runtime, which is conformed by numerical experiments given in Section \ref{sec4}.

\section{A novel greedy Kaczmarz method}\label{sec3}

%Considering that the convergence factor of the RGRK method reaches the minimum when $\theta=1$ and the index set $\mathcal{V}_{k}$ is determined by the maximum distance between $x_{k+1}$ and $x_{k}$ in this case, i.e., the size of the index set $\mathcal{V}_{k}$ also attains its minimum,
On basis of the analysis of the GRK and RGRK methods and inspired by some recent works on selection strategy for working index based on the maximum residual \cite{Nutini2018,Haddock2019,Rebrova2019}, we design a new method for solving consistent linear systems which includes two main steps. In the first step, we use the maximum entries of the residual vector $r_k$ to determine an index set $\mathcal{R}_{k}$ whose specific definition is given in Algorithm \ref{alg3}. In the second step, we capture an index from the set $\mathcal{R}_{k}$ with which we can make sure the distance between $x_{k+1}$ and $x_{k}$ be the largest for any possible $x_{k+1}$. On a high level, the new method seems to change the order of the two main steps of Algorithm \ref{alg1} or Algorithm \ref{alg2}. However, comparing with the GRK and RGRK methods, we do not need to calculate the norm of each row of the matrix $A$ any longer in Algorithm \ref{alg3}, and, like the RGRK method with $\theta=1$, our method always makes the distance between $x_{k+1}$ and $x_{k}$ be the largest when finding $x_{k+1}$. % Also, we can find that the number of elements in set $\mathcal{R}_{k}$ may be less than the number of elements in set $\mathcal{U}_{k}$, i.e., $|\mathcal{R}_{k}|<|\mathcal{U}_{k}|$ because $\mathcal{R}_{k}$ is determined by the maximum entries of the vector $r_k$. Moreover,
In fact, the new method combines the maximum residual rule and the maximum distance rule. % in Algorithm \ref{alg3},
%while the probability criterion of the GRK and RGRK methods that is capable of grasping larger entries of the residual vector at each iteration. Consequently,
These characters make the method reduce the computation cost at each iteration and hence behaves better in the computing time, which is confirmed by numerical experiments given in Section \ref{sec4}.

Based on the above introduction, we propose the following algorithm, i.e., Algorithm \ref{alg3}.
\begin{algorithm}
\caption{The GK method}
\label{alg3}
\begin{algorithmic}
\STATE{\mbox{Input:} ~$A\in C^{m\times n}$, $b\in C^{m}$, $\ell$ , initial estimate $x_0$}
\STATE{\mbox{Output:} ~$x_\ell$}
\FOR{$k=0, 1, 2, \ldots, \ell-1$}
\STATE{Determine the index set of positive integers
 $$\mathcal{R}_{k}=\left\{\tilde{i}_{k}\Bigg| \tilde{i}_{k}= {\rm arg} \max \limits _{1 \leq i  \leq m}\left|r^{(i)}_k\right|\right\}.$$}
\STATE{Compute
$$i_{k}={\rm arg} \max \limits _{\tilde{i}_{k}\in \mathcal{R}_{k}}\left\{\frac{\left|r^{(\tilde{i}_{k})}_k\right|^2 }{\left\|A^{(\tilde{i}_{k})}\right\|^2_{2}  }\right\}.$$}
\STATE{Set
$$x_{k+1}=x_{k}+\frac{ r^{(i_{k})}_k}{ \| A^{\left(i_{k}\right)} \|_{2}^{2}}( A^{(i_{k})})^{\ast}.$$}
\ENDFOR
%\RETURN $T$
\end{algorithmic}
\end{algorithm}

\begin{remark}
\label{rmk}
Note that if
$$\left|r^{(i_k)}_k\right|=  \max \limits _{1 \leq i  \leq m}\left|r^{(i)}_k\right|,$$
then $i_k\in\mathcal{R}_{k}.$ So the index set $\mathcal{R}_{k}$ in Algorithm \ref{alg3} is nonempty for all iteration index $k$.
\end{remark}

\begin{remark}
\label{rmk110}
Like Algorithm \ref{alg1} or Algorithm \ref{alg2}, we can use the values of $\frac{\left|r^{(\tilde{i}_{k})}_k\right|^2 }{\left\|A^{(\tilde{i}_{k})}\right\|^2_{2}  }$ for $\tilde{i}_{k}\in \mathcal{R}_{k}$ as a probability selection criterion to devise a randomized version of Algorithm \ref{alg3}. In this case, the convergence factor may be a little worse than that of Algorithm \ref{alg3} because, for the latter, the index is selected based on the largest value of $\frac{\left|r^{(\tilde{i}_{k})}_k\right|^2 }{\left\|A^{(\tilde{i}_{k})}\right\|^2_{2}  }$ for $\tilde{i}_{k}\in \mathcal{R}_{k}$, which make the distance between $x_{k+1}$ and $x_{k}$ be the largest for any possible $x_{k+1}$.
\end{remark}

\begin{remark}
\label{rmk3}
 To the best of our knowledge, the idea of Algorithm \ref{alg3} is brand new in the fields of designing greedy Kaczmarz type algorithms and we don't find it in any work on greedy Gauss-Seidel methods either. So it is interesting to apply this idea to Gauss-Seidel methods to devise some new greedy Gauss-Seidel algorithms for solving other problems like large linear least squares problems. We will consider this topic in a subsequent paper.
\end{remark}

In the following, we give the convergence theorem of the GK method.

\begin{theorem}
\label{theorem1}
The iteration sequence $\{x_k\}_{k=0}^\infty$ generated by Algorithm \ref{alg3}, starting from an initial guess $x_0\in C^{n}$ in the column space of $A^{*}$, converges linearly to the least-Euclidean-norm solution $x_{\star}=A^{\dag}b$ and
\begin{equation}
\label{4}
  \| x_1-x_\star\|^2_{2} \leq \left(1-\frac{1}{|\mathcal{R}_{0}|}\cdot\frac{1}{\sum\limits_{i_0\in \mathcal{R}_{0}}\|A^{(i_0)}\|^2_2 }\cdot\frac{1}{m} \cdot\lambda_{\min}\left( A^{*} A\right)\right)\cdot\| x_0-x_\star\|^2_{2},
\end{equation}
and
 \begin{equation}
\label{5}
\| x_{k+1}-x_\star\|^2_{2} \leq\left(1-\frac{1}{|\mathcal{R}_{k}|}\cdot\frac{1}{\sum\limits_{i_k\in \mathcal{R}_{k}}\|A^{(i_k)}\|^2_2 }\cdot\frac{1}{m-1} \cdot\lambda_{\min}\left( A^{*} A\right) \right)\| x_k-x_\star\|^2_{2},~~k=1, 2, \ldots .
\end{equation}
Moreover, let $\alpha=\max \{|\mathcal{R}_{k}|\}$, $\beta=\max\{\sum\limits_{i_k\in \mathcal{R}_{k}}\|A^{(i_k)}\|^2_2\}, k=0, 1, 2, \ldots.$ Then,
\begin{equation}
\label{6}
\| x_{k}-x_\star\|^2_{2} \leq\left(1-\frac{\lambda_{\min}\left( A^{*} A\right) }{\alpha\cdot \beta\cdot(m-1)} \right)^{k-1}\left(1- \frac{\lambda_{\min}\left( A^{*} A\right)}{|\mathcal{R}_{0}| \cdot\sum\limits_{i_0\in \mathcal{R}_{0}}\|A^{(i_0)}\|^2_2 \cdot m  }\right)\cdot\| x_0-x_\star\|^2_{2},~~k=1, 2, \ldots .
\end{equation}
\end{theorem}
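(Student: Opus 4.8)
The plan is to follow the standard one-step-contraction template for Kaczmarz-type methods, the only genuinely new part being the bookkeeping attached to the index set $\mathcal{R}_{k}$.

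First I would record the basic orthogonality identity for a single step. Writing $r_k=b-Ax_k$ and using consistency in the form $b^{(i_k)}=A^{(i_k)}x_{\star}$, so that $A^{(i_k)}(x_k-x_{\star})=-r_k^{(i_k)}$, a short expansion of $\|x_{k+1}-x_{\star}\|_2^2$ with $x_{k+1}=x_k+\frac{r_k^{(i_k)}}{\|A^{(i_k)}\|_2^2}(A^{(i_k)})^{*}$ gives
\[
\|x_{k+1}-x_{\star}\|_2^2=\|x_k-x_{\star}\|_2^2-\frac{|r_k^{(i_k)}|^2}{\|A^{(i_k)}\|_2^2}.
\]
(Note $\|A^{(i_k)}\|_2\neq0$ as long as $r_k\neq0$: then $|r_k^{(i_k)}|=\max_{1\le i\le m}|r_k^{(i)}|>0$ by Remark \ref{rmk}, and consistency forbids a zero row with nonzero right-hand side.)

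Second, I would lower-bound the decrement $|r_k^{(i_k)}|^2/\|A^{(i_k)}\|_2^2$. By the definition of $i_k$ this ratio equals $\max_{\tilde i_k\in\mathcal{R}_{k}}|r_k^{(\tilde i_k)}|^2/\|A^{(\tilde i_k)}\|_2^2$, hence is at least its average over $\mathcal{R}_{k}$; bounding each denominator by $\sum_{i_k\in\mathcal{R}_{k}}\|A^{(i_k)}\|_2^2$ and then retaining only the term $\tilde i_k=i_k$ in the numerator sum yields
\[
\frac{|r_k^{(i_k)}|^2}{\|A^{(i_k)}\|_2^2}\ \ge\ \frac{1}{|\mathcal{R}_{k}|}\cdot\frac{1}{\sum_{i_k\in\mathcal{R}_{k}}\|A^{(i_k)}\|_2^2}\cdot|r_k^{(i_k)}|^2.
\]
Since $i_k$ realises $\max_{1\le i\le m}|r_k^{(i)}|$, we have $\|r_k\|_2^2\le m\,|r_k^{(i_k)}|^2$; and for $k\ge1$ this sharpens to $\|r_k\|_2^2\le(m-1)\,|r_k^{(i_k)}|^2$, because the update at step $k-1$ makes row $i_{k-1}$ exact ($r_k^{(i_{k-1})}=0$), so $r_k$ has at most $m-1$ nonzero components while $|r_k^{(i_k)}|$ is still its largest modulus. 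Hence $|r_k^{(i_k)}|^2/\|A^{(i_k)}\|_2^2\ge\big(|\mathcal{R}_{k}|\,\mu_k\,\sum_{i_k\in\mathcal{R}_{k}}\|A^{(i_k)}\|_2^2\big)^{-1}\|r_k\|_2^2$, with $\mu_0=m$ and $\mu_k=m-1$ for $k\ge1$.

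Third, I would relate $\|r_k\|_2^2$ to $\|x_k-x_{\star}\|_2^2$. Since $x_0$ is in the column space of $A^{*}$, so is $x_{\star}=A^{\dag}b$, and each iterate differs from the previous one by a multiple of $(A^{(i_k)})^{*}$; thus $x_k-x_{\star}$ stays in the column space of $A^{*}$, whence $\|r_k\|_2^2=\|A(x_k-x_{\star})\|_2^2\ge\lambda_{\min}(A^{*}A)\|x_k-x_{\star}\|_2^2$. Substituting into the identity of the first step produces \eqref{4} when $k=0$ and \eqref{5} when $k\ge1$. Iterating \eqref{5} from $k$ down to $1$ and applying \eqref{4} once more, then replacing $|\mathcal{R}_{j}|$ and $\sum_{i_j\in\mathcal{R}_{j}}\|A^{(i_j)}\|_2^2$ by $\alpha$ and $\beta$, gives \eqref{6}; linear convergence is immediate since the identity of the first step forces the contraction factor into $[0,1)$ and $\lambda_{\min}(A^{*}A)>0$.

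The two steps I expect to need the most care — and the only ones that are not pure algebra — are the claim $r_k^{(i_{k-1})}=0$ for $k\ge1$ (this is exactly what turns $m$ into $m-1$, and must be extracted from the update formula together with consistency) and the invariance $x_k-x_{\star}\in\mathrm{col}(A^{*})$, which is what legitimises using the \emph{smallest positive} eigenvalue of $A^{*}A$ in place of a possibly vanishing one; everything else is routine.
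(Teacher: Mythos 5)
Your proposal is correct and follows essentially the same route as the paper: the Pythagorean one-step identity, the lower bound on the decrement via averaging over $\mathcal{R}_k$ and bounding the row norms by $\sum_{i_k\in\mathcal{R}_k}\|A^{(i_k)}\|_2^2$, the observation $r_k^{(i_{k-1})}=0$ that upgrades $\frac{1}{m}$ to $\frac{1}{m-1}$ for $k\ge1$, the bound $\|r_k\|_2^2\ge\lambda_{\min}(A^{*}A)\|x_k-x_\star\|_2^2$ for $x_k-x_\star$ in the column space of $A^{*}$, and induction for \eqref{6}. If anything, your handling of the averaging step is slightly more careful than the paper's (which writes $\sum_{i_0\in\mathcal{R}_0}\|A^{(i_0)}\|_2^{-2}$ as if it equaled $\bigl(\sum_{i_0\in\mathcal{R}_0}\|A^{(i_0)}\|_2^{2}\bigr)^{-1}$, though the inequality goes the right way), and you make explicit the invariance $x_k-x_\star\in\mathrm{col}(A^{*})$ that the paper leaves implicit.
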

\begin{proof}

From the update formula in Algorithm \ref{alg3}, we have
 $$x_{k+1}-x_{k}=\frac{ r^{(i_{k})}_k}{ \| A^{\left(i_{k}\right)} \|_{2}^{2}}( A^{(i_{k})})^{\ast},$$
which implies that $x_{k+1}-x_{k}$ is parallel to $( A^{(i_{k})})^{\ast}$. Meanwhile,
\begin{eqnarray*}
A^{\left(i_{k}\right)}(x_{k+1}-x_{\star})&=&A^{\left(i_{k}\right)}\left(x_{k}-x_\star+\frac{ r^{(i_{k})}_k}{ \| A^{\left(i_{k}\right)} \|_{2}^{2}}( A^{(i_{k})})^{\ast}\right)
\\
&=& A^{\left(i_{k}\right)}\left(x_{k}-x_\star\right)+r^{(i_{k})}_k,
\end{eqnarray*}
which together with the fact $Ax_{\star}=b$ gives
\begin{eqnarray*}
A^{\left(i_{k}\right)}(x_{k+1}-x_{\star})&=& ( A^{\left(i_{k}\right)}x_{k}-b^{\left(i_{k}\right)})+(b^{\left(i_{k}\right)}- A^{\left(i_{k}\right)}x_{k})=0.
\end{eqnarray*}
Then $x_{k+1}-x_{\star}$ is orthogonal to $A^{\left(i_{k}\right)}$. Thus, the vector $x_{k+1}-x_{k}$ is perpendicular to the vector $x_{k+1}-x_{\star}$. By the Pythagorean theorem, we get
\begin{equation}
\label{7}
\|x_{k+1}-x_{\star}\|^2_2=\|x_{k}-x_{\star}\|^2_2-\|x_{k+1}-x_{k}\|^2_2.
\end{equation}
On the other hand, from Algorithm \ref{alg3}, we have
$$\left|r^{(i_{k})}_k\right|=\max\limits_{1\leq i\leq m}  \left|r^{(i)}_k\right|
~{\rm and}~
\frac{ \left|r^{(i_{k})}_k\right|^2 }{ \left\| A^{\left(i_{k}\right)}\right \|_{2}^{2}}=\max\limits_{i\in \mathcal{R}_{k}}\frac{ \left|r^{(i)}_k\right|^2 }{ \left\| A^{\left(i\right)}\right \|_{2}^{2}}.
$$
Then
\begin{align}
\left\|x_{k+1}-x_{k}\right\|^2_2&=~\frac{ \left|r^{(i_{k})}_k\right|^2 }{ \left\| A^{\left(i_{k}\right)}\right \|_{2}^{2}} \geq~ \sum\limits_{i_k\in \mathcal{R}_{k}} \frac{\frac{ \left|r^{(i_{k})}_k\right|^2 }{ \left\| A^{\left(i_{k}\right)}\right \|_{2}^{2}}}{\sum\limits_{i\in \mathcal{R}_{k}}\frac{ \left|r^{(i)}_k\right|^2 }{ \left\| A^{\left(i\right)}\right \|_{2}^{2}} }\cdot \frac{ \left|r^{(i_{k})}_k\right|^2 }{ \left\| A^{\left(i_{k}\right)}\right \|_{2}^{2}} \notag\\
&\geq ~\sum\limits_{i_k\in \mathcal{R}_{k}} \frac{1}{|\mathcal{R}_{k}|}\cdot\frac{ \left|r^{(i_{k})}_k\right|^2 }{ \left\| A^{\left(i_{k}\right)}\right \|_{2}^{2}} = ~\sum\limits_{i_k\in \mathcal{R}_{k}} \frac{1}{|\mathcal{R}_{k}|}\cdot\frac{ \max\limits_{1\leq i\leq m}  \left|r^{(i)}_k\right|^2}{ \left\| A^{\left(i_{k}\right)}\right \|_{2}^{2}}.\label{8}
\end{align}
Thus, substituting (\ref{8}) into (\ref{7}), we obtain
\begin{equation}
\label{71212}
\left\|x_{k+1}-x_{\star}\right\|^2_{2}\leq\left\|x_{k}-x_{\star}\right\|^2_{2}-~\sum\limits_{i_k\in \mathcal{R}_{k}} \frac{1}{|\mathcal{R}_{k}|}\cdot\frac{ \max\limits_{1\leq i\leq m}  \left|r^{(i)}_k\right|^2}{ \left\| A^{\left(i_{k}\right)}\right \|_{2}^{2}} .
\end{equation}
%From Algorithm \ref{alg2}, we know
%\begin{align}
%\left|A^T_{(j_{k})}(b-Ax_k)\right|   =\max\limits_{1\leq j\leq n}  \left|A^T_{(j)}(b-Ax_k)\right|.\label{102321}
%\end{align}

For $k=0$, we have
\begin{align*}
\max\limits_{1\leq i\leq m}  \left|r^{(i)}_0\right|^2&= ~\max\limits_{1\leq i\leq m}  \left|r^{(i)}_0\right|^2\cdot \frac{\left\|r_0\right\|^2_2}{\sum\limits_{i=1}^{m}\left|r^{(i)}_0\right|^2}\geq~\frac{1}{m}\cdot\left\| r_0\right\|^2_2,
\end{align*}
which together with a result from \cite{Bai2018}:
\begin{align}
\|Ax\|^2_2\geq\lambda_{\min}\left( A^{*} A\right)\|x\|^2_2  \label{1210}
\end{align}
is valid for any vector $x$ in the column space of $A^{*}$, implies
\begin{align}
\max\limits_{1\leq i\leq m}\left|r^{(i)}_0\right|^2&\geq~\frac{1}{m}\cdot\lambda_{\min}\left( A^{*} A\right) \cdot\left\| x_\star-x_0\right\|^2_2. \label{10}
\end{align}
%where the second inequality follows from the inequality (\ref{1210}).
Thus, substituting (\ref{10}) into (\ref{71212}), we obtain
\begin{align}
\left\|x_{1}-x_{\star}\right\|^2_{2}&\leq~\left\|x_{0}-x_{\star}\right\|^2_{2}-\sum\limits_{i_0\in \mathcal{R}_{0}} \frac{1}{|\mathcal{R}_{0}|}\cdot\frac{ 1}{ \left\| A^{\left(i_{0}\right)}\right \|_{2}^{2}} \cdot \frac{1}{m}\cdot\lambda_{\min}\left( A^{*} A\right) \cdot\left\|x_0-x_\star\right\|^2_{2} \notag\\
&=~\left(1-\frac{1}{|\mathcal{R}_{0}|}\cdot\frac{1}{\sum\limits_{i_0\in \mathcal{R}_{0}}\|A^{(i_0)}\|^2_2 }\cdot\frac{1}{m} \cdot\lambda_{\min}\left( A^{*} A\right)\right)\cdot\| x_0-x_\star\|^2_{2}, \notag
\end{align}
which is just the estimate (\ref{4}).

%From the results (\ref{8}), (\ref{10}) and (\ref{1210}), we have
%\begin{align}
%\|A(x_{1}-x_{0})\|^2_2&\geq~ \sum\limits_{j_0\in R_0}\frac{1}{|R_0|}\cdot\frac{1}{n}\cdot \frac{ 1}{ \left\| A_{\left(j_{0}\right)}\right \|_{2}^{2}}\cdot \left\|A^T (b-Ax_0)\right\|^2_2\notag\\
%&\geq~\frac{1}{|R_0|}\cdot\frac{1}{n}\cdot \frac{ 1}{ \sum\limits_{j_0\in R_0}\left\| A_{\left(j_{0}\right)}\right \|_{2}^{2}}\cdot \lambda_{\min}\left( A^{T} A\right) \cdot \left\|A(x_{0}-x_{\star})\right\|^2_2. \label{11}
%\end{align}
%
%Combining the results (\ref{11}) and (\ref{7}), we have
%\begin{eqnarray*}
%  \|A( x_1-x_\star)\|^2 &=&\|A(x_{0}-x_{\star})\|^2_2-\|A(x_{1}-x_{0})\|^2_2\\
%&\leq&\left(1-\frac{1}{|R_0|}\cdot\frac{1}{n} \cdot\frac{1}{\sum\limits_{j_0\in R_0}\|A_{(j_0)}\|^2_2 }\cdot\lambda_{\min}\left( A^{T} A\right)\right)\cdot\| A(x_0-x_\star)\|^2.
%\end{eqnarray*}
For $k\geq1$, we have
\begin{align*}
\max\limits_{1\leq i\leq m}\left|r^{(i)}_k\right|^2&= ~\max\limits_{1\leq i\leq m}\left|r^{(i)}_k\right|^2 \cdot \frac{\left\| r_k\right\|^2_2}{\sum\limits_{i=1}^{m}\left|r^{(i)}_k\right|^2}.
\end{align*}
Note that, according to the update formula in Algorithm \ref{alg3}, it is easy to obtain
\begin{align} r_{k}^{\left(i_{k-1}\right)} &=b^{\left(i_{k-1}\right)}-A^{\left(i_{k-1}\right)} x_{k} \notag\\ &=b^{\left(i_{k-1}\right)}-A^{\left(i_{k-1}\right)}\left(x_{k-1}+\frac{r^{\left(i_{k-1}\right)}_{k-1}}{\left\|A^{\left(i_{k-1}\right)}\right\|_{2}^{2}}\left(A^{\left(i_{k-1}\right)}\right)^{*}\right) \notag\\ &=b^{\left(i_{k-1}\right)}-A^{\left(i_{k-1}\right)} x_{k-1}-r^{\left(i_{k-1}\right)}_{k-1} \notag\\
 &=0.\label{9}
 \end{align}
Then
\begin{align*}
\max\limits_{1\leq i\leq m}\left|r^{(i)}_k\right|^2&=~\max\limits_{1\leq i\leq m}\left|r^{(i)}_k\right|^2 \cdot \frac{\left\| r_k\right\|^2_2}{\sum\limits _{i=1 \atop i \neq i_{k-1}}^{m}\left|r^{(i)}_k\right|^2}\geq~ \frac{1}{m-1}\cdot\left\|r_k\right\|^2_2,
\end{align*}
which together with  (\ref{1210}) yields
\begin{align}
\max\limits_{1\leq i\leq m}\left|r^{(i)}_k\right|^2&\geq~ \frac{1}{m-1}\cdot\lambda_{\min}\left( A^{*} A\right)\left\|x_\star-x_k\right\|^2_2.\label{12}
\end{align}
Thus, substituting (\ref{12}) into (\ref{71212}), we get
\begin{align}
\left\|x_{k+1}-x_{\star}\right\|^2_{2}&\leq~\left\|x_{k}-x_{\star}\right\|^2_{2}-\sum\limits_{i_k\in \mathcal{R}_{k}} \frac{1}{|\mathcal{R}_{k}|}\cdot  \frac{ 1}{ \left\| A^{\left(i_{k}\right)}\right \|_{2}^{2}}\cdot \frac{1}{m-1}\cdot\lambda_{\min}\left( A^{*} A\right)\left\|x_k-x_\star\right\|^2_{2}  \notag\\
&=~\left(1-\frac{1}{|\mathcal{R}_{k}|}\cdot\frac{1}{\sum\limits_{i_k\in \mathcal{R}_{k}}\|A^{(i_k)}\|^2_2 }\cdot\frac{1}{m-1} \cdot\lambda_{\min}\left( A^{*} A\right) \right)\| x_k-x_\star\|^2_{2}.
\end{align}
So the estimate (\ref{5}) is obtained.
%
%and from the results (\ref{8}), (\ref{1210}) and (\ref{12}), we have
%\begin{align}
%\|A(x_{k+1}-x_{k})\|^2_2&\geq~\sum\limits_{j_k\in R_k} \frac{1}{|R_k|}\cdot\frac{1}{n-1}\cdot \frac{ 1}{\left\| A_{\left(j_{k}\right)}\right \|_{2}^{2}}\cdot \|A^T (b-Ax_k) \|^2_2 \notag\\
%&\geq~ \frac{1}{|R_k|}\cdot\frac{1}{n-1}\cdot \frac{ 1}{ \sum\limits_{j_k\in R_k}\left\| A_{\left(j_{k}\right)}\right \|_{2}^{2}}\cdot \lambda_{\min}\left( A^{T} A\right) \cdot \left\|A(x_{k}-x_{\star})\right\|^2_2. \label{13}
%\end{align}
%Combining the results (\ref{7}) and (\ref{13}), we have
%\begin{eqnarray*}
%\|A(x_{k+1}-x_{\star})\|^2_2&=&\|A(x_{k}-x_{\star})\|^2_2-\|A(x_{k+1}-x_{k})\|^2_2\\
%&\leq&\left(1- \frac{1}{|R_k|}\cdot\frac{1}{n-1}\cdot \frac{ 1}{ \sum\limits_{j_k\in R_k}\left\| A_{\left(j_{k}\right)}\right \|_{2}^{2}}\cdot \lambda_{\min}\left( A^{T} A\right)\right) \cdot \left\|A(x_{k}-x_{\star})\right\|^2_2.
%\end{eqnarray*}
%The estimate (\ref{5}) is obtained.
By induction on the iteration index $k$, we can get the estimate (\ref{6}).
\end{proof}

\begin{remark}
\label{rmk1}
Since $1\leq \alpha \leq m$ and $\min\limits_{1\leq i\leq m}\|A^{(i)}\|^{2}_2\leq \beta \leq \|A\|^{2}_{F}$, it holds that
$$\left(1-\frac{\lambda_{\min}\left( A^{*} A\right) }{\min\limits_{1\leq i\leq m}\|A^{(i)}\|^{2}_2 \cdot (m-1) } \right) \leq\left(1-\frac{\lambda_{\min}\left( A^{*} A\right) }{\alpha\cdot \beta\cdot(m-1)} \right)\leq\left(1-\frac{\lambda_{\min}\left( A^{*} A\right) }{m\cdot \|A\|^{2}_{F}\cdot(m-1)} \right).$$
Hence, the convergence factor of the GK method is small when the parameters $\alpha$ and $\beta$ are small. So, the smaller size of $|\mathcal{R}_{k}|$ is,  the better convergence factor of the GK method is when $\beta$ is fixed. From the definitions of $\mathcal{U}_{k}$, $\mathcal{V}_{k}$, and $\mathcal{R}_{k}$, we can find that
%From the analysis before Algorithm \ref{alg3}, we know that
the size of $|\mathcal{R}_{k}|$ may be smaller than those of $|\mathcal{U}_{k}|$ and $|\mathcal{V}_{k}|$. This is one of the reasons that our algorithm behaves better in computing time.
\end{remark}
\begin{remark}\label{rmk12111}

%The comparison of both convergent factors in these two methods will be given in the following Remark \ref{rmk12111}.

 If $\alpha=1$ and $ \beta=\min\limits_{1\leq i\leq m}\|A^{(i)}\|^{2}_2$, the right side of (\ref{5}) is smaller than
 $$\left(1-\frac{1}{\min\limits_{1\leq i\leq m}\|A^{(i)}\|^{2}_2 \cdot (m-1)}\lambda_{\min}\left( A^{*} A\right)\right)\left\|x_{k}-x_{\star}\right\|^2_{2}.$$
Since
 \begin{align} \label{1000}
 \min\limits_{1\leq i\leq m}\|A^{(i)}\|^{2}_2 \cdot (m-1)\leq\|A\|^2_F-\min\limits_{1\leq i\leq m}\|A^{(i)}\|^{2}_2<\|A\|^2_F,
 \end{align}
which implies
 $$\frac{1}{\min\limits_{1\leq i\leq m}\|A_{(i)}\|^{2}_2 \cdot (m-1)}>\frac{1}{2} \left(\frac{1}{\|A\|_{F}^{2}-\min \limits_{1 \leq i \leq m}\left\|A_{(i)}\right\|_{2}^{2}}+\frac{1}{\|A\|_{F}^{2}}\right ),$$
we have
\begin{align}\label{2000}
&\left(1-\frac{1}{\min\limits_{1\leq i\leq m}\|A^{(i)}\|^{2}_2 \cdot (m-1)}\lambda_{\min}\left( A^{*} A\right)\right)\left\|x_{k}-x_{\star}\right\|_{2}^{2}\nonumber\\
&<\left(1-\frac{1}{2}\left (\frac{1}{\|A\|_{F}^{2}-\min \limits_{1 \leq i \leq m}\left\|A^{(i)}\right\|_{2}^{2}}+\frac{1}{\|A\|_{F}^{2}} \right)\lambda_{\min}\left( A^{*} A\right)\right)\left\|x_{k}-x_{\star}\right\|_{2}^{2}.
\end{align}

Note that the error estimate in expectation of the GRK method in \cite{Bai2018} is
$$
\mathbb{E}_{k}\left\|x_{k+1}-x_{\star}\right\|_{2}^{2} \leq\left (1- \frac{1}{2} \left(\frac{1}{\|A\|_{F}^{2}-\min \limits_{1 \leq i \leq m}\left\|A^{(i)}\right\|_{2}^{2}}+\frac{1}{\|A\|_{F}^{2}} \right)\lambda_{\min } (A^{*} A) \right  )\left\|x_{k}-x_{\star}\right\|_{2}^{2} , $$
where $ k=1,2, \ldots.$ So the convergence factor of GK method is slightly better for the above case.
\end{remark}

For the RGRK method, its error estimate in expectation given in  \cite{Bai2018r} is
$$
\mathbb{E}_{k}\left\|x_{k+1}-x_{\star}\right\|_{2}^{2} \leq\left (1-  \left(\frac{\theta}{\|A\|_{F}^{2}-\min \limits_{1 \leq i \leq m}\left\|A^{(i)}\right\|_{2}^{2}}+\frac{1-\theta}{\|A\|_{F}^{2}}\right)\lambda_{\min } (A^{*} A)  \right  )\left\|x_{k}-x_{\star}\right\|_{2}^{2} , $$
where $ k=1,2, \ldots .$ The estimate % $ For $\theta \in[0,1]$, the convergence factor monotonically decreasing with respect to $\theta$ and
attains its minimum at $\theta=1$, which is
$$\left(1- \frac{1}{\|A\|_{F}^{2}-\min \limits_{1 \leq i \leq m}\|A^{(i)} \|_{2}^{2}} \lambda_{\min } (A^{*} A) \right)\left\|x_{k}-x_{\star}\right\|_{2}^{2}.$$
Considering \eqref{1000} and similar to derivation of \eqref{2000}, we can get that when
 $\alpha=1$ and $ \beta=\min\limits_{1\leq i\leq m}\|A^{(i)}\|^{2}_2$, %similar to the analysis in Remark \ref{rmk12111}, it is easy to obtain that
%$$\frac{1}{\min\limits_{1\leq i\leq m}\|A_{(i)}\|^{2}_2 \cdot (m-1)}\geq \frac{1}{\|A\|_{F}^{2}-\min \limits_{1 \leq i \leq m}\left\|A_{(i)}\right\|_{2}^{2}} \geq\frac{\theta}{\|A\|_{F}^{2}-\min \limits_{1 \leq i \leq m}\left\|A^{(i)}\right\|_{2}^{2}}+\frac{1-\theta}{\|A\|_{F}^{2}}.$$
%we have
%\begin{align*}
%&\left(1-\frac{1}{\min\limits_{1\leq i\leq m}\|A^{(i)}\|^{2}_2 \cdot (m-1)}\lambda_{\min}\left( A^{*} A\right)\right)\left\|x_{k}-x_{\star}\right\|_{2}^{2}\\
%&\leq\left(1-  \frac{1}{\|A\|_{F}^{2}-\min \limits_{1 \leq i \leq m}\left\|A^{(i)}\right\|_{2}^{2}}  \lambda_{\min}\left( A^{*} A\right)\right)\left\|x_{k}-x_{\star}\right\|_{2}^{2}\\
%&\leq\left(1- \left (\frac{\theta}{\|A\|_{F}^{2}-\min \limits_{1 \leq i \leq m}\left\|A^{(i)}\right\|_{2}^{2}}+\frac{1-\theta}{\|A\|_{F}^{2}} \right)\lambda_{\min}\left( A^{*} A\right)\right)\left\|x_{k}-%x_{\star}\right\|_{2}^{2}.
%\end{align*}
the convergence factor of GK method is also slightly better than that of the RGRK method.

\section{Experimental results}\label{sec4}
In this section, we %report the numerical results of
compare the GRK, RGRK and GK methods %for solving consistent linear systems (\ref{1})
with the matrix $A\in C^{m\times n}$ from two sets. One is generated randomly by using the MATLAB function \texttt{randn}, and the other includes some full-rank sparse matrices (e.g., ch7-8-b1, ch8-8-b1, model1, Trec8, Stranke94 and mycielskian5) and some rank-deficient sparse matrices (e.g., flower\_5\_1, relat6, D\_11, Sandi\_sandi, GD01\_c and GD02\_a) originating in different applications from \cite{Davis2011}. They possess certain structures, such as square ($m=n$) (e.g., Stranke94, mycielskian5, GD01\_c and GD02\_a), thin ($m>n$) (e.g., ch7-8-b1, ch8-8-b1, flower\_5\_1 and relat6 ) or fat ($m<n$) (e.g., model1, Trec8, D\_11 and Sandi\_sandi), and some properties, such as symmetric (e.g., Stranke94 and mycielskian5) or nonsymmetric (e.g., GD01\_c and GD02\_a).
%In order to show that the GGS method much more efficient than the GRCD method, we compare the performance of these methods using the examples from \cite{Bai2019}.
%To compare the GGS method and the GRCD method fairly and directly, we use the examples from \cite{Bai2019}.

We compare the three methods mainly in terms of the iteration numbers (denoted as ``IT'') and the computing time in seconds (denoted as ``CPU''). It should be pointed out here that the IT and CPU listed in our numerical results denote the arithmetical averages of the required iteration numbers and the elapsed CPU times with respect to 50 times repeated runs of the corresponding methods, and %For all examples,
we always set $\theta=1$ in the RGRK method in our experiments since %It is know that
the convergence factor attains its minimum in this case. To give an intuitive compare of the three methods, we also present the iteration number speed-up of GK against GRK, which is defined as
\begin{eqnarray*}
\texttt{IT speed-up\_1}=\frac{\texttt{IT of GRK } }{\texttt{IT of GK } },
\end{eqnarray*}
the iteration number speed-up of GK against RGRK, which is defined as
\begin{eqnarray*}
\texttt{IT speed-up\_2}=\frac{\texttt{IT of RGRK } }{\texttt{IT of GK } },
\end{eqnarray*}
the computing time speed-up of GK against GRK, which is defined as
\begin{eqnarray*}
\texttt{CPU speed-up\_1}=\frac{\texttt{CPU of GRK} }{\texttt{CPU of GK} },
\end{eqnarray*}
and the computing time speed-up of GK against RGRK, which is defined as
\begin{eqnarray*}
\texttt{CPU speed-up\_2}=\frac{\texttt{CPU of RGRK} }{\texttt{CPU of GK} }.
\end{eqnarray*}
In addition, for the sparse matrices from \cite{Davis2011}, we define the density as follows
\begin{eqnarray*}
\texttt{density}=\frac{\texttt{number of nonzero of an $m\times n$ matrix}}{\texttt{mn}},
\end{eqnarray*}
and use \texttt{cond(A)} to represent the Euclidean condition number of the matrix $A$. % in the numerical tables.

In our specific experiments, the solution vector $x_\star$ is generated randomly by the MATLAB function \texttt{randn} and we set the right-hand side $b=Ax_{\star}$. All the test problems are started from an initial zero vector $x_{0}=0$ and terminated once the \emph{relative solution error} (\texttt{RES}), defined by $$\texttt{RES}=\frac{\left\|x_{k}-x_{\star}\right\|^{2}_2}{\left\|x_{\star}\right\|^{2}_2},$$ satisfies $\texttt{RES}\leq10^{-6}$ or the number of iteration exceeds $ 200,000$.

\begin{table}[tp]
  \centering
  \fontsize{6.5}{8}\selectfont
    \caption{ IT and CPU of GRK, RGRK and GK for m-by-n matrices $A$ with $n=50$ and different $m$.}
    \label{tab1}
    \begin{tabular}{|c|c|c|c|c|c|c|}
 \hline
\multicolumn{2}{|c|}{$m\times n$}&$1000\times 50$&$ 2000\times 50$&$3000\times 50$&$4000\times50$&$5000\times50$\cr
\hline
  \multirow{5}{*}{IT}&GRK           & 88.7600 & 79.3200& 75.4200& 74.1200&72.3000\cr
                     &RGRK          & 67.0000 & 57.0000& 50.0000& 51.0000& 48\cr
                     &GK            & 77.0000 & 64.0000& 58.0000& 54.0000&52\cr \cline{2-7}
                     &speed-up\_1   & 1.1527  & 1.2394 & 1.3003 & 1.3726 &1.3904\cr\cline{2-7}
                     &speed-up\_2   & 0.8701  & 0.8906 & 0.8621 & 0.9444 &0.9231\cr\hline

  \multirow{5}{*}{CPU}&GRK          & 0.0475  & 0.0606 & 0.0681 & 0.1241 &0.1416 \cr
                     &RGRK          & 0.0300  & 0.0353 & 0.0394 & 0.0862 &0.0928 \cr
                     &GK            & 0.0066  & 0.0084 & 0.0094 & 0.0222 &0.0278\cr\cline{2-7}
                     &speed-up\_1   & 7.2381  & 7.1852 & \textbf{7.2667} & 5.5915 &5.0899 \cr\cline{2-7}
                     &speed-up\_2   & \textbf{ 4.5714}  & 4.1852 & 4.2000 & 3.8873 &3.3371 \cr\hline

   \end{tabular}
\end{table}

\begin{table}[tp]
  \centering
  \fontsize{6.5}{8}\selectfont
    \caption{ IT and CPU of GRK, RGRK and GK for m-by-n matrices $A$ with $n=100$ and different $m$.}
    \label{tab2}
    \begin{tabular}{|c|c|c|c|c|c|c|}
 \hline
\multicolumn{2}{|c|}{$m\times n$}&$1000\times 100$&$ 2000\times 100$&$3000\times 100$&$4000\times100$&$5000\times100$\cr
\hline
  \multirow{5}{*}{IT}&GRK           & 205.0400& 167.8400& 157.2600& 152    & 146.8600    \cr
                     &RGRK          & 177     & 129     & 120     & 114    & 110         \cr
                     &GK            & 183     & 137     & 122     & 122    & 113       \cr \cline{2-7}
                     &speed-up\_1   & 1.1204  & 1.2251  & 1.2890  & 1.2459 & 1.2996   \cr\cline{2-7}
                     &speed-up\_2   & 0.9672  & 0.9416  & 0.9836  & 0.9344 & 0.9735   \cr\hline

  \multirow{5}{*}{CPU}&GRK          & 0.0959  & 0.0972  & 0.1163  & 0.2744 & 0.3409  \cr
                     &RGRK          & 0.0844  & 0.0747  & 0.0912  & 0.2172 & 0.2512   \cr
                     &GK            & 0.0187  & 0.0256  & 0.0291  & 0.0663 & 0.0791   \cr\cline{2-7}
                     &speed-up\_1   & 5.1167  & 3.7927  & 4       & 4.1415 & 4.3123  \cr\cline{2-7}
                     &speed-up\_2   & 4.5000  & 2.9146  & 3.1398  & 3.2783 & 3.1779   \cr\hline

   \end{tabular}
\end{table}

\begin{table}[tp]
  \centering
  \fontsize{6.5}{8}\selectfont
    \caption{ IT and CPU of GRK, RGRK and GK for m-by-n matrices $A$ with $n=150$ and different $m$.}
    \label{tab3}
    \begin{tabular}{|c|c|c|c|c|c|c|}
 \hline
\multicolumn{2}{|c|}{$m\times n$}&$1000\times 150$&$ 2000\times 150$&$3000\times 150$&$4000\times150$&$5000\times150$\cr
\hline
  \multirow{5}{*}{IT}&GRK           & 364.6800 & 276.0200 & 249.5800 & 233.6800 & 226.4000  \cr
                     &RGRK          & 318      & 239      & 199      & 189      & 179       \cr
                     &GK            & 321      & 245      & 202      & 192      & 183       \cr \cline{2-7}
                     &speed-up\_1   & 1.1361   & 1.1266   & 1.2355   & 1.2171   & 1.2372    \cr\cline{2-7}
                     &speed-up\_2   & 0.9907   & 0.9755   & 0.9851   & 0.9844   & 0.9781    \cr\hline

  \multirow{5}{*}{CPU}&GRK          & 0.1906   & 0.1734   & 0.2712   & 0.6228   & 0.8194    \cr
                     &RGRK          & 0.1675   & 0.1572   & 0.2081   & 0.5209   & 0.6547    \cr
                     &GK            & 0.0462   & 0.0500   & 0.0737   & 0.1556   & 0.2391    \cr\cline{2-7}
                     &speed-up\_1   & 4.1216   & 3.4687   & 3.6780   & 4.0020   & 3.4275    \cr\cline{2-7}
                     &speed-up\_2   & 3.6216   & 3.1437   & 2.8220   & 3.3474   & 2.7386     \cr\hline

   \end{tabular}
\end{table}

\begin{table}[tp]
  \centering
  \fontsize{6.5}{8}\selectfont
    \caption{ IT and CPU of GRK, RGRK and GK for m-by-n matrices $A$ with $n=200$ and different $m$.}
    \label{tab4}
    \begin{tabular}{|c|c|c|c|c|c|c|}
 \hline
\multicolumn{2}{|c|}{$m\times n$}&$1000\times 200$&$ 2000\times 200$&$3000\times 200$&$4000\times200$&$5000\times200$\cr
\hline
  \multirow{5}{*}{IT}&GRK           & 557.7000 & 398.6800 & 351.0400 & 328.3800 & 312.2400 \cr
                     &RGRK          & 517      & 341      & 294      & 277      & 257 \cr
                     &GK            & 504      & 334      & 294      & 264      & 258 \cr \cline{2-7}
                     &speed-up\_1   & 1.1065   & 1.1937   & 1.1940   & 1.2439   & 1.2102 \cr\cline{2-7}
                     &speed-up\_2   & 1.0258   & 1.0210   & 1        & 1.0492   & 0.9961 \cr\hline

  \multirow{5}{*}{CPU}&GRK          & 0.2706   & 0.2797   & 0.4300   & 1.1750   & 1.3834 \cr
                     &RGRK          & 0.2566   & 0.2425   & 0.4034   & 1.0497   & 1.1747 \cr
                     &GK            & 0.0741   & 0.0791   & 0.1197   & 0.3753   & 0.5031 \cr\cline{2-7}
                     &speed-up\_1   & 3.6540   & 3.5375   & 3.5927   & 3.1307   & 2.7497 \cr\cline{2-7}
                     &speed-up\_2   & 3.4641   & 3.0672   & 3.3708   & 2.7968   & 2.3348 \cr\hline

   \end{tabular}
\end{table}

\begin{table}[tp]
  \centering
  \fontsize{6.5}{8}\selectfont
    \caption{ IT and CPU of GRK, RGRK and GK for m-by-n matrices $A$ with $m=50$ and different $n$.}
    \label{tab5}
    \begin{tabular}{|c|c|c|c|c|c|c|}
 \hline
\multicolumn{2}{|c|}{$m\times n$}&$50\times 1000$&$ 50\times 2000$&$50\times 3000$&$50\times4000$&$50\times5000$\cr
\hline
  \multirow{5}{*}{IT}&GRK           & 127.6600 & 114.3800 & 100.9400 & 97.7000 & 95.2000 \cr
                     &RGRK          & 127      & 118      & 99       & 91      & 91  \cr
                     &GK            & 126      & 117      & 98       & 92      & 91  \cr \cline{2-7}
                     &speed-up\_1   & 1.0132   & 0.9776   & 1.0300   & 1.0620  & 1.0462  \cr\cline{2-7}
                     &speed-up\_2   & 1.0079   & 1.0085   & 1.0102   & 0.9891  & 1   \cr\hline

  \multirow{5}{*}{CPU}&GRK          & 0.0594   & 0.0669   & 0.0650   & 0.1247  & 0.1563  \cr
                     &RGRK          & 0.0581   & 0.0625   & 0.0638   & 0.1166  & 0.1412  \cr
                     &GK            & 0.0172   & 0.0275   & 0.0313   & 0.0625  & 0.0766  \cr\cline{2-7}
                     &speed-up\_1   & 3.4545   & 2.4318   & 2.0800   & 1.9950  & 2.0408  \cr\cline{2-7}
                     &speed-up\_2   & 3.3818   & 2.2727   & 2.0400   & 1.8650  & 1.8449 \cr\hline

   \end{tabular}
\end{table}

\begin{table}[tp]
  \centering
  \fontsize{6.5}{8}\selectfont
    \caption{ IT and CPU of GRK, RGRK and GK for m-by-n matrices $A$ with $m=100$ and different $n$.}
    \label{tab6}
    \begin{tabular}{|c|c|c|c|c|c|c|}
 \hline
\multicolumn{2}{|c|}{$m\times n$}&$100\times 1000$&$ 100\times 2000$&$100\times 3000$&$100\times4000$&$100\times5000$\cr
\hline
  \multirow{5}{*}{IT}&GRK           & 285.1800 & 264.6200 & 232.9000 & 217.3200 & 212.3800  \cr
                     &RGRK          & 276      & 255      & 232      & 215      &  208\cr
                     &GK            & 268      & 256      & 226      & 214      & 208\cr \cline{2-7}
                     &speed-up\_1   & 1.0641   & 1.0337   & 1.0305   & 1.0155   & 1.0211\cr\cline{2-7}
                     &speed-up\_2   & 1.0299   & 0.9961   & 1.0265   & 1.0047   & 1\cr\hline

  \multirow{5}{*}{CPU}&GRK          & 0.1412   & 0.1638   & 0.2197   & 0.4278   & 0.5150 \cr
                     &RGRK          & 0.1375   & 0.1497   & 0.2172   & 0.4253   & 0.5031\cr
                     &GK            & 0.0431   & 0.0622   & 0.0788   & 0.1747   & 0.2122\cr\cline{2-7}
                     &speed-up\_1   & 3.2754   & 2.6332   & 2.7897   & 2.4490   & 2.4271\cr\cline{2-7}
                     &speed-up\_2   & 3.1884   & 2.4070   & 2.7579   & 2.4347   & 2.3711\cr\hline

   \end{tabular}
\end{table}

\begin{table}[tp]
  \centering
  \fontsize{6.5}{8}\selectfont
    \caption{ IT and CPU of GRK, RGRK and GK for m-by-n matrices $A$ with $m=150$ and different $n$.}
    \label{tab7}
    \begin{tabular}{|c|c|c|c|c|c|c|}
 \hline
\multicolumn{2}{|c|}{$m\times n$}&$150\times 1000$&$ 150\times 2000$&$150\times 3000$&$150\times4000$&$150\times5000$\cr
\hline
  \multirow{5}{*}{IT}&GRK           & 589.7600 & 441.2800 & 364.5600 & 342.4400 & 340.4400 \cr
                     &RGRK          & 580      & 432      & 355      & 331      & 330 \cr
                     &GK            & 586      & 427      & 358      & 338      & 323 \cr \cline{2-7}
                     &speed-up\_1   & 1.0064   & 1.0334   & 1.0183   & 1.0131   & 1.0540  \cr\cline{2-7}
                     &speed-up\_2   & 0.9898   & 1.0117   & 0.9916   & 0.9793   & 1.0217  \cr\hline

  \multirow{5}{*}{CPU}&GRK          & 0.3700   & 0.3922   & 0.4500   & 0.9569   & 1.2394 \cr
                     &RGRK          & 0.3531   & 0.3503   & 0.4416   & 0.9291   & 1.1884 \cr
                     &GK            & 0.0975   & 0.1291   & 0.2250   & 0.5281   & 0.7097 \cr\cline{2-7}
                     &speed-up\_1   & 3.7949   & 3.0387   & 2        & 1.8118   & 1.7464 \cr\cline{2-7}
                     &speed-up\_2   & 3.6218   & 2.7143   & 1.9625   & 1.7592   & 1.6746 \cr\hline

   \end{tabular}
\end{table}

\begin{table}[tp]
  \centering
  \fontsize{6.5}{8}\selectfont
    \caption{ IT and CPU of GRK, RGRK and GK for m-by-n matrices $A$ with $m=200$ and different $n$.}
    \label{tab8}
    \begin{tabular}{|c|c|c|c|c|c|c|}
 \hline
\multicolumn{2}{|c|}{$m\times n$}&$200\times 1000$&$ 200\times 2000$&$200\times 3000$&$200\times4000$&$200\times5000$\cr
\hline
  \multirow{5}{*}{IT}&GRK           &946.4600 & 641.0400 & 540.6600&497.5000& 471.0200  \cr
                     &RGRK          &932      & 636      & 514     &492     & 455   \cr
                     &GK            &962      & 628      & 521     &499     & 455   \cr \cline{2-7}
                     &speed-up\_1   &0.9838   & 1.0208   & 1.0377  &0.9970  & 1.0352   \cr\cline{2-7}
                     &speed-up\_2   &0.9688   & 1.0127   & 0.9866  &0.9860  & 1   \cr\hline

  \multirow{5}{*}{CPU}&GRK          &0.6241   & 0.6269   & 0.7834  &1.7025  & 2.1916    \cr
                     &RGRK          &0.6238   & 0.5909   & 0.7381  &1.6322  & 2.0866    \cr
                     &GK            &0.1766   & 0.2272   & 0.3756  &1.0044  & 1.2925   \cr\cline{2-7}
                     &speed-up\_1   &3.5345   & 2.7593   & 2.0857  &\textbf{1.6951}  & 1.6956   \cr\cline{2-7}
                     &speed-up\_2   &3.5327   & 2.6011   & 1.9651  &1.6251  & \textbf{1.6144 }   \cr\hline

   \end{tabular}
\end{table}

For the first class of matrices, that is, the randomly generated matrices, % $A\in C^{m\times n}$, 
the numerical results on IT and CPU are listed in \Cref{tab1,tab2,tab3,tab4} when $m>n$, and in \Cref{tab5,tab6,tab7,tab8} when $m<n$. From \Cref{tab1,tab2,tab3,tab4,tab5,tab6,tab7,tab8}, we see that the GK method requires almost the same number of iterations as those of the GRK and RGRK methods but the GK method is more efficient in term of the computing time. The computing time speed-up of GK against GRK is at least 1.6951 (see Table \ref{tab8} for the $200\times 4000$ matrix) and at most 7.2667 (see Table \ref{tab1} for the $3000\times  50$ matrix), and the computing time speed-up of GK against RGRK is at least 1.6144 (see Table \ref{tab8} for the $200\times 5000$ matrix) and at most 4.5714 (see Table \ref{tab1} for the $1000\times  50$ matrix).

\begin{table}[!htbp]\centering%%%%%%%%%%%%%%%%%%%%%%table 6.5
\begin{small}\scriptsize
\caption{IT and CPU of GRK, RGRK and GK for m-by-n matrices $A$ with different $m$ and $n$.}\centering \label{tab9}
 \begin{tabular}{ccccccccccccccccccc}
 \hline
\textbf{name}  &&\textbf{ch7-8-b1}     & \textbf{ch8-8-b1} &\textbf{model1}&\textbf{ Trec8}&\textbf{Stranke94} &\textbf{mycielskian5}   \cr
\hline
 $m \times n$  && $1176 \times 56 $ & $1568 \times 64$ & $362 \times 798$  &$23 \times 84$    &$10 \times 10 $        &$ 23 \times  23 $    \cr

full rank       && Yes                 &    Yes               & Yes          & Yes            &Yes                   &Yes           \cr

density        && 3.57\%              &    3.13\%            & 1.05\%        & 28.42\%            &90.00\%            &26.84\%            \cr

\texttt{cond(A)} &&  4.79e+14         &   3.48e+14           & 17.57        &  26.89              &51.73                &  27.64               \cr
 \hline
\multirow{5}{*}{IT}  &GRK & 103.9800 & 113.0800 & 4.7484e+03 & 1.7655e+03 & 5.6706e+03 &4.2651e+03    \cr
                     &RGRK& 87.0600  & 89       & 4504       & 1646       & 4158       &4268    \cr
                     &GK  & 87       & 89       & 4183       & 1681       & 3636       &4169    \cr\cline{2-8}
 &  speed-up\_1           & 1.1952   & 1.2706   & 1.1352     & 1.0502     & 1.5596     &1.0230    \cr \cline{2-8}
 &  speed-up\_2           & 1.0007   & 1        & 1.0767     & 0.9792     & 1.1436     &1.0237    \cr
\hline
\multirow{5}{*}{CPU} &GRK & 0.0178   & 0.0200   & 0.5381     & 0.1619     & 0.4847     &0.3816    \cr
                     &RGRK& 0.0116   & 0.0119   & 0.5128     & 0.1500     & 0.3497     &0.3700    \cr
                     &GK  & 0.0047   & 0.0047   & 0.1953     & 0.0441     & 0.0800     &0.0872    \cr\cline{2-8}
 &  speed-up\_1           & 3.8000   & 4.2667   & 2.7552     & 3.6738     & \textbf{6.0586}     &4.3763    \cr\cline{2-8}
 &  speed-up\_2           & 2.4667   & 2.5333   & 2.6256     & 3.4043     & \textbf{4.3711}     &4.2437    \cr
\hline
\end{tabular}
\end{small}
\end{table}

\begin{table}[!htbp]\centering%%%%%%%%%%%%%%%%%%%%%%table 6.5
\begin{small}\scriptsize
\caption{IT and CPU of GRK, RGRK and GK for m-by-n matrices $A$ with different $m$ and $n$.}\centering \label{tab10}
 \begin{tabular}{ccccccccccccccccccc}
 \hline
\textbf{name}  &&\textbf{flower\_5\_1}     & \textbf{relat6} &\textbf{D\_11}&\textbf{Sandi\_sandi}&\textbf{GD01\_c} &\textbf{GD02\_a} \cr
\hline
 $m \times n$  && $ 211 \times 201  $ & $ 2340 \times  157 $ & $ 169 \times 461 $  &$ 314 \times 360 $  &$ 33\times33  $    &$  23 \times 23  $ \cr

full rank       && No                 &    No               & No                & No                  &No                  &No      \cr

density        && 1.42 \%              &   2.21  \%        & 3.79 \%            & 0.54 \%             & 12.40\%            &16.45 \%      \cr
\texttt{cond(A)} &&  2.00e+16          &    Inf            &  2.21e+17         &  1.47e+17            &  Inf               & Inf           \cr
 \hline
\multirow{5}{*}{IT}   &GRK& 9.8127e+03 &  1.6099e+03  & 682.8200 &1756         &1.9329e+03 & 1.3928e+03 \cr
                     &RGRK& 9.8616e+03 &  1.5199e+03  & 668      &1.6864e+03   &1819       & 1469 \cr
                       &GK& 10521      &  1510        & 690      &1787         &1823       & 1228  \cr\cline{2-8}
 &  speed-up\_1           & 0.9327     &  1.0661      & 0.9896   &0.9827       &1.0603     & 1.1342  \cr \cline{2-8}
 & speed-up\_2            & 0.9373     &  1.0065      & 0.9681   &0.9437       &0.9978     & 1.1963   \cr
\hline
\multirow{5}{*}{CPU} &GRK&  1.0197     &  0.2550      & 0.0866   &0.1812       &0.1663     & 0.1241   \cr
                    &RGRK&  0.9653     &  0.2353      & 0.0853   &0.1741       &0.1538     & 0.1219    \cr
                      &GK&  0.3006     &  0.0766      & 0.0369   &0.0600       &0.0378     & 0.0303    \cr\cline{2-8}
 &  speed-up\_1          &  3.3919     &  3.3306      &\textbf{ 2.3475 }  &3.0208       &4.3967     & 4.0928     \cr\cline{2-8}
 & speed-up\_2           &  3.2110     &  3.0735      & \textbf{2.3136}   &2.9010       &4.0661     & 4.0206     \cr
\hline
\end{tabular}
\end{small}
\end{table}

For the second class of matrices, that is, the sparse matrices from \cite{Davis2011}, the numerical results on IT and CPU are listed in \Cref{tab9} when the matrices %$A\in C^{m\times n}$
are full-rank with different $m$ and $n$, and in \Cref{tab10} when the matrices %$A\in C^{m\times n}$
are rank-deficient with different $m$ and $n$. In both tables, the iteration numbers of the GRK, RGRK and GK methods are almost the same, but again the CPUs of the GK method are smaller than those of the other two methods, with the CPU speed-up of GK against GRK being at least 2.3475 (the matrix D\_11 in \Cref{tab10}) and at most 6.0586 (the matrix Stranke94 in \Cref{tab9}), and the CPU speed-up of GK against RGRK being at least 2.3136 (the matrix D\_11 in \Cref{tab10}) and at most 4.3711 (the matrix Stranke94 in \Cref{tab9}).

Therefore, in all the cases, although the GK method requires almost the same number of iterations as those of the GRK and RGRK methods, our method %significantly
outperforms the others in term of the computing time, which is consistent with the analysis before Algorithm \ref{alg3}.

\clearpage

\bibliographystyle{siamplain}
\bibliography{references}
\end{document}